\numberwithin{equation}{section}
\newcommand{\quash}[1]{}
\title[Frobenius Norm of Inverse of Non-Negative Matrix]{On the Frobenius Norm of the Inverse of a Non-Negative Matrix}
\author{Elsa Frankel}
\address{Department of Mathematics, Wellesley College,  Wellesley, MA 02482 USA.}
\email{ef110@wellesley.edu}
\author{John Urschel}
\address{Department of Mathematics, Massachusetts Institute of Technology, Cambridge, MA 02139 USA}
\email{urschel@mit.edu}
\subjclass[2020]{15A60.}
\keywords{Frobenius norm, non-negative matrix, S-matrix.}
\newtheorem{theorem}{Theorem}[section]
\newtheorem{lemma}[theorem]{Lemma}
\newtheorem{proposition}[theorem]{Proposition}
\begin{document}

\begin{abstract}
We prove a new lower bound for the Frobenius norm of the inverse of an non-negative matrix. This bound is only a modest improvement over previous results, but is sufficient for fully resolving a conjecture of Harwitz and Sloane, commonly referred to as the S-matrix conjecture, for all dimensions larger than a small constant.
\end{abstract}

\maketitle

\section{Introduction}

Given a matrix $A  \in \mathbb{R}^{n \times n}$, let $\|A\|_F = \mathrm{trace}(A^TA)^{1/2}$ be the Frobenius norm and $\|A\|_{\max} = \max_{i,j} |A_{ij}|$ be the max norm. These two norms satisfy the inequalities $\|A\|_{\max} \le \|A\|_{F} \le n \|A\|_{\max}$, which are tight for a matrix with only a single non-zero entry and a matrix with all entries equal, respectively. Given a matrix with a fixed maximum entry size, one may ask similar questions about the Frobenius norm of the inverse of the matrix. The Frobenius norm of the inverse can be arbitrarily large as the smallest singular value of $A$ tends to zero. A lower bound of $ \|A^{-1}\|_F \ge \|A\|_{\max}^{-1}$ can be easily produced using only the Cauchy-Schwarz inequality. Let $\langle A, B \rangle_F = \mathrm{trace}(B^TA)$ be the Frobenius inner product, and recall that a matrix $A$ is called a Hadamard matrix if $A \in \{\pm 1\}^{n \times n}$ and $A^T A = n I$, and called an S-matrix if $A \in \{0,1\}^{n \times n}$ and $A^T A = \frac{n+1}{4}(I + \bm{1}\bm{1}^T)$, where $\bm{1}$ is the all-ones vector.

\begin{proposition}
Let $A$ be a non-singular matrix. Then $\|A^{-1}\|_F \ge \|A\|^{-1}_{\max}$, with equality if and only if $A$ is a multiple of a Hadamard matrix.
\end{proposition}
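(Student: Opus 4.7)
The plan is to start from the trace identity $n = \operatorname{trace}(A A^{-1}) = \sum_{i,j} A_{ij}(A^{-1})_{ji}$ and chain two elementary inequalities. First I would bound each factor $|A_{ij}| \le \|A\|_{\max}$ and pull this constant out, leaving $n \le \|A\|_{\max} \sum_{i,j} |(A^{-1})_{ji}|$. Then I would apply Cauchy--Schwarz to the entrywise $\ell^1$-norm of $A^{-1}$, viewed as an inner product with the all-ones matrix in $\mathbb{R}^{n \times n}$, to get $\sum_{i,j}|(A^{-1})_{ji}| \le n \|A^{-1}\|_F$. Combining these two steps yields $n \le n \|A\|_{\max} \|A^{-1}\|_F$, which is the claimed bound after dividing by $n \|A\|_{\max}$.

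For the equality characterization, I would trace back when each of the three inequalities used becomes an equality. Equality in Cauchy--Schwarz requires $|(A^{-1})_{ij}|$ to be constant over all $i,j$; equality in the first step requires $|A_{ij}| = \|A\|_{\max}$ for every $(i,j)$; and equality in the triangle inequality $\sum A_{ij}(A^{-1})_{ji} \le \sum |A_{ij}||(A^{-1})_{ji}|$ requires $A_{ij}(A^{-1})_{ji} \ge 0$ for every pair. Setting $m := \|A\|_{\max}$ and letting $c$ be the common value of $|(A^{-1})_{ij}|$, the sign-matching condition gives $(A^{-1})_{ji} = (c/m) A_{ij}$, equivalently $A^{-1} = (c/m) A^T$.

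Plugging this back into $A A^{-1} = I$ produces $A A^T = (m/c) I$. Together with $mc = 1/n$ (forced by $n = n^2 mc$ from the trace identity), this gives $A A^T = n m^2 I$. Hence $A/m$ lies in $\{\pm 1\}^{n\times n}$ and satisfies $(A/m)(A/m)^T = n I$, so $A$ is the scalar multiple $m$ of a Hadamard matrix. The converse (a multiple of a Hadamard matrix achieves equality) is a quick direct computation: for Hadamard $H$, $H^{-1} = H^T/n$, so $\|H^{-1}\|_F = n^{-1/2}\sqrt{n} = 1 = \|H\|_{\max}^{-1}$, and scaling by $m$ scales both sides compatibly.

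No serious obstacle is anticipated; the whole argument is essentially a careful chasing of Cauchy--Schwarz equality cases. The only point that requires care is keeping the sign condition $A_{ij}(A^{-1})_{ji}\ge 0$ separate from the size conditions and recognizing that together they force the rigid relation $A^{-1} \propto A^T$, which is exactly what distinguishes Hadamard matrices among $\{\pm 1\}$-matrices of maximal entry.
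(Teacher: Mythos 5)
Your proof is correct and follows essentially the same route as the paper: both start from $\mathrm{trace}(AA^{-1})=n$ and combine a Cauchy--Schwarz step with the bound $\|A\|_F\le n\|A\|_{\max}$, the only difference being that the paper applies Cauchy--Schwarz directly to the pair $(A^T,A^{-1})$ in the Frobenius inner product while you first pull out $\|A\|_{\max}$ entrywise and then apply Cauchy--Schwarz against the all-ones matrix. The equality analysis likewise matches, arriving at $A^{-1}\propto A^T$ with all entries of $A$ of maximal modulus, hence a scalar multiple of a Hadamard matrix.
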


\begin{proof}
Without loss of generality, consider an $n \times n$ matrix $A$ with $\|A\|_{\max} = 1$. We have
\[n^2 = \langle A^T,A^{-1} \rangle^2_F \le \|A\|_F^2 \|A^{-1}\|_F^2 \le n^2 \|A\|_{\max}^2 \|A^{-1}\|_F^2 , \]
where equality occurs throughout if and only if $A \in \{\pm 1\}^{n \times n}$ and $A^T = c A^{-1}$ for some constant $c$. Multiplying $A^T = c A^{-1}$ by $A$ implies that $c = n$ and $A^T A = n I$, so $A$ is a Hadamard matrix.
\end{proof}

The same question regarding the minimum value of $\|A^{-1}\|_F$ for non-negative matrices was asked by Harwitz and Sloane in 1976  \cite[Sec. IV.B]{sloane1976masks}. In particular, they conjectured that, for an $n \times n$ non-negative, non-singular matrix $A$, $\|A^{-1}\|_F \ge \frac{2n}{n+1} \|A\|_{\max}^{-1}$, with equality if and only if $A$ is a positive multiple of an $S$-matrix. We convert this conjecture into a theorem for $n$ larger than a small constant.

\begin{theorem}[Main Result]\label{thm:main}
Let $n \ge 1000$ and $A$ be an $n \times n$ non-negative, non-singular matrix. Then \[\|A^{-1}\|_F \ge \frac{2n}{n+1} \|A\|_{\max}^{-1},\]
with equality if and only if $A$ is a positive multiple of an $S$-matrix.
\end{theorem}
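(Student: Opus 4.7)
Normalize so that $\|A\|_{\max} = 1$, set $B = A^{-1}$ and $J = \mathbf{1}\mathbf{1}^T$, and write $s_A = \langle A, J\rangle_F$ and $s_B = \langle B, J\rangle_F$. The key consequence of non-negativity combined with the normalization is the entrywise estimate $A_{ij}^2 \le A_{ij}$, which summed over $i,j$ gives
\[
\|A\|_F^2 \le s_A, \qquad \text{with equality iff } A \in \{0,1\}^{n\times n}.
\]
The Cauchy--Schwarz step from the proposition can be sharpened by first projecting both $A$ and $B^T$ onto the orthogonal complement of the direction $J$. The Frobenius inner product of the shifted pair $A - (s_A/n^2)J$ and $B^T - (s_B/n^2)J$ telescopes cleanly to $n - s_A s_B/n^2$, yielding
\[
\bigl(n - s_A s_B/n^2\bigr)^2 \le \bigl(\|A\|_F^2 - s_A^2/n^2\bigr)\bigl(\|B\|_F^2 - s_B^2/n^2\bigr).
\]
Substituting $\|A\|_F^2 \le s_A$ and solving for $\|B\|_F^2$ produces a two-parameter lower bound that equals exactly $4n^2/(n+1)^2$ at the $S$-matrix configuration $(s_A, s_B) = (n(n+1)/2,\, 2n/(n+1))$.

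\medskip

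\noindent\textbf{Case analysis.} Split on the size of $s_A$. If $s_A \le (n+1)^2/4$, then the Cauchy--Schwarz bound of the proposition already yields $\|B\|_F^2 \ge n^2/\|A\|_F^2 \ge n^2/s_A \ge 4n^2/(n+1)^2$ directly. In the complementary regime $s_A > (n+1)^2/4$, combine the shifted Cauchy--Schwarz above with a spectral argument: the Rayleigh quotient at $\mathbf{1}/\sqrt{n}$ gives $\sigma_1(A)^2 \ge s_A^2/n^2$, so $\sum_{i\ge 2}\sigma_i(A)^2 \le s_A - s_A^2/n^2$, and AM--HM on $\sigma_2(A)^2, \ldots, \sigma_n(A)^2$ lower-bounds $\|B\|_F^2 = \sigma_1(A)^{-2} + \sum_{i\ge 2}\sigma_i(A)^{-2}$ in terms of $s_A$ alone.

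\medskip

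\noindent\textbf{The main obstacle.} The bounds of the previous paragraph leave a residual gap of order $O(1/n^2)$ for intermediate $s_A$ near $n^2/2$; this is where the hypothesis $n \ge 1000$ enters, and closing this small but stubborn gap will be the bulk of the technical work. The plan is to extract extra slack by either (i) improving the Rayleigh quotient lower bound on $\sigma_1(A)$ using the row and column sums of $A$ simultaneously, or (ii) refining the AM--HM step on $\sigma_2(A), \ldots, \sigma_n(A)$ by quantifying how close these singular values must be to each other at near-equality and translating the resulting variance into a non-negative matrix quantity. Either route needs to exploit non-negativity of $A$ beyond the single bound $\|A\|_F^2 \le s_A$, and matching the size of the gap should be possible cleanly once $n$ exceeds an explicit constant.

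\medskip

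\noindent\textbf{Equality case.} Tracing the chain of inequalities backwards: equality in $\|A\|_F^2 \le s_A$ forces $A \in \{0,1\}^{n\times n}$; equality in the shifted Cauchy--Schwarz forces $A - (s_A/n^2)J$ and $B^T - (s_B/n^2)J$ to be parallel; and the optimization in the case analysis pins $(s_A, s_B) = (n(n+1)/2,\, 2n/(n+1))$. Combining these constraints with the identity $AB = I$ gives $A^T A = \tfrac{n+1}{4}(I + J)$, so $A$ is an $S$-matrix, and undoing the normalization produces the ``positive multiple of an $S$-matrix'' conclusion of the theorem.
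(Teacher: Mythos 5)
Your proposal has a genuine gap, and it sits exactly at the crux of the problem. The preliminary machinery is fine: the shifted Cauchy--Schwarz inequality $\bigl(n - s_A s_B/n^2\bigr)^2 \le \bigl(\|A\|_F^2 - s_A^2/n^2\bigr)\bigl(\|B\|_F^2 - s_B^2/n^2\bigr)$ together with $\|A\|_F^2 \le s_A$ is essentially a reparametrization of the known Cheng/Drnov{\u{s}}ek argument, and for even $n$ that argument provably bottoms out at $\|A^{-1}\|_F \ge \tfrac{2\sqrt{n^2-2n+2}}{n}$, which is \emph{strictly below} the target $\tfrac{2n}{n+1}$ (compare $(n^2-2n+2)(n+1)^2 = n^4 - n^2 + 2n + 2 < n^4$). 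Your ``main obstacle'' paragraph concedes that an $O(1/n^2)$ gap remains and offers only a plan --- sharpening the Rayleigh quotient bound on $\sigma_1(A)$ or the AM--HM step on the remaining singular values --- but both of these are continuous relaxations of the same flavor, and no refinement of that flavor can close the gap: the continuous optimizer of the relaxed problem has row sums $\approx \tfrac{(n-1)^2}{2(n-2)}$ and achieves a value below $\tfrac{2n}{n+1}$, so the missing input must be a discreteness argument. That is what the paper supplies: it shows that any even-dimensional $B$ violating the bound must be so close to a $0$--$1$ matrix $C$ with row sums $n/2$ that the integer entries of $CC^T$ are forced to equal $\lfloor n/4\rfloor$ on a large principal submatrix, and then derives a contradiction because a single column of the small perturbation $H(B)$ (with $\|H(B)e_i\|_2^2 < \tfrac{1}{4(n-t)}$) cannot produce $t = 50$ inner products all within $O(1/n)$ of $\tfrac14$ against the near-orthonormal rows of the normalized $C$. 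Nothing in your outline exploits integrality, so the step you defer is not a technical loose end but the entire new content of the theorem.

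Two smaller issues. First, you never separate the parities: for odd $n$ the full statement (including the equality case) is already Cheng's theorem and should simply be cited, while for even $n$ no $S$-matrix exists and one must prove the inequality is \emph{strict} --- which again requires the full strength of the hard step, since your equality analysis would otherwise only identify a non-existent extremizer. Second, ``the optimization in the case analysis pins $(s_A,s_B) = (n(n+1)/2,\,2n/(n+1))$'' is asserted rather than proved; as written, your two-parameter bound is only claimed to \emph{equal} the target at that point, not to be minimized there over the feasible region, and indeed it is not.
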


This conjecture originally arose from a problem in spectroscopy (see \cite{sloane1979multiplexing,sloane1976masks} for details). In 1987, Cheng proved this conjecture when $n$ is odd, and obtained the slightly worse bound $\|A^{-1}\|_F >\frac{2\sqrt{n^2-2n+2}}{n} \|A\|_{\max}^{-1}$ when $n$ is even \cite[Corollary 3.4]{cheng1987application}. The proof technique used relies on the Kiefer-Wolfowitz equivalence theorem \cite{kiefer1960equivalence}. Drnov{\u{s}}ek provided a simpler proof of the same bounds using only the Cauchy-Schwarz inquality and basic calculus \cite{drnovvsek2013s}. Harwitz and Sloane's conjecture later appeared in Zhan's {\it Open problems in matrix theory}, and was referred to as the S-matrix conjecture \cite[Conjecture 11]{zhan2008open}. Since Cheng's proof for $n$ odd, the S-matrix conjecture has only been fully proven in a number of special cases (see, for instance, \cite{hu2012some,zou2010note,zou2012conjecture}).  This becomes even more surprising given the simplicity of the techniques used here to prove the conjecture when $n$ is even.

Our proof technique is as follows. First, in Section \ref{sec:2}, we analyze the structure of even-dimensional matrices $B$ with $\|B^{-1}\|_F$ below our desired bound (Lemma \ref{lm:prop}). Any such matrix $B$ must be nearly binary, and $B$ times a small perturbation of $B^T$ must have off-diagonal entries with fractional part roughly $1/4$ away from an integer. Then, in Section \ref{sec:3}, we prove that, for $n$ not too small, this can never occur, i.e., the size of the perturbation to $B^T$ is too small to result in entries (of $B$ times the perturbed $B^T$) that are $1/4$ away from an integer (Lemmas \ref{lm:n_large}, \ref{lm:lm2}, and \ref{lm:lm3}). No effort was made to optimize the requirement of $n \ge 1000$, and the same argument provided below paired with some additional casework and analysis can decrease this value significantly. However, proving the conjecture for, say, $n \le 8$ would likely require casework specifically designed for small dimensions. We leave both of these tasks to  the motivated reader.

\section{Properties of matrices that do not satisfy Theorem \ref{thm:main}}\label{sec:2}

First, we provide a sketch of Drnov{\u{s}}ek's proof of Cheng's result for $n$ even (see \cite{drnovvsek2013s} for details). Let $A \in \mathbb{R}^{n \times n}$, $n$ even, be a non-negative matrix with $\|A\|_{\max}\le 1$,
\[ F(A) = \begin{bmatrix} 0 & \bm{1}^T \\ \bm{1} &  \sqrt{\frac{n}{n-2}} \frac{n}{2}  A^{-1} \end{bmatrix}, \quad  G(A) = \begin{bmatrix} 0 & \bm{1}^T \\ \bm{1} & \sqrt{\frac{n-2}{n}}  \big( \frac{2(n-1)}{n-2} I - \frac{2}{n} \bm{1} \bm{1}^T \big) A^T \end{bmatrix},\quad {and}\]
\[H(A) = \frac{\sqrt{n(n-2)}}{2(n-1)} \left[ \sqrt{\frac{n}{n-2}} \frac{n}{2}  A^{-1} -  \sqrt{\frac{n-2}{n}}\bigg( \frac{2(n-1)}{n-2} I - \frac{2}{n} \bm{1} \bm{1}^T \bigg) A^T\right].\]
By the Cauchy-Schwarz inequality,
\[\frac{n^2(n^2-2)^2}{(n-2)^2} =  \langle F(A), G(A) \rangle_F^2 \le \|F(A) \|_F^2 \|G(A)\|_F^2 = \bigg(2n + \frac{n^3}{4(n-2)}\|A^{-1}\|_F^2\bigg)(2n + h(A)), \]
where
\[ h(A) = \bigg\|  \sqrt{\frac{n-2}{n}}  \bigg( \frac{2n-2}{n-2} I - \frac{2}{n} \bm{1} \bm{1}^T \bigg) A^T \bigg\|_F^2 = \frac{4(n-1)^2}{n(n-2)}\sum_{i=1}^n\sum_{j=1}^nA_{ij}^2-\frac{4}{n}\sum_{i=1}^n\bigg(\sum_{j=1}^nA_{ij}\bigg)^2.
\]
Taking the derivative of $h(A)$ with respect to an entry $A_{ij}$ reveals that, for non-negative matrices $A$ with $\|A\|_{\max} \le 1$, $h(A)$ is maximized by some $A \in \{0,1\}^{n\times n}$. When $A \in \{0,1\}^{n\times n}$, $h(A)$ is simply a function of the row sums $A \bm{1}$, and is maximized when $A \bm{1} = \frac{n}{2} \bm{1}$, producing an upper bound of
\[h(A) \le \frac{n(n^2-2n+2)}{n-2}.\]
Combining this bound with the previous one above immediately produces the lower bound $\|A^{-1}\|_F \ge \frac{2\sqrt{n^2-2n+2}}{n} \|A\|_{\max}^{-1}$. The proof when $n$ is odd is similar, but with $ \left[\begin{smallmatrix} 1 & \bm{1}^T \\ \bm{1} & - \tfrac{n+1}{2} A^{-1} \end{smallmatrix}\right]$ and $\left[\begin{smallmatrix} 1 & \bm{1}^T \\ \bm{1} & - \left(2I -\tfrac{2}{n+1} \bm{1} \bm{1}^T\right) A^{T} \end{smallmatrix}\right]$ in place of $F(A)$ and $G(A)$. When $n$ is odd, this same argument achieves the desired bound $\|A^{-1}\|_F \ge \frac{2n}{n+1} \|A\|_{\max}^{-1}$, and it follows quickly that all inequalities are tight if and only if $A$ is an S-matrix. Again, we refer the reader to \cite{drnovvsek2013s} for additional details regarding Drnov{\u{s}}ek's proof of Cheng's result.

Here, we consider the properties of an invertible non-negative matrix $B \in \mathbb{R}^{n \times n}$, $n$ even, for which $\|B^{-1}\|_F \le \tfrac{2n}{n+1} \|B\|_{\max}^{-1}$. In such a case, the argument presented above must be nearly tight, leading to structure in $B$. In particular, $B$ must almost be in $\{0,1\}^{n \times n}$, its row sums must almost be $\tfrac{n}{2}$, and $F(B)$ must almost equal $G(B)$. We make these observations rigorous below.

\begin{lemma}\label{lm:prop}
Let $n>2$ be even, $B$ be an $n \times n$ non-negative matrix with $\|B\|_{\max} \le 1$ and $\|B^{-1}\|_F \le \frac{2n}{n+1}$, $\bm{r} = B \bm{1} - \frac{(n-1)^2}{2(n-2)} \bm{1}$, and $c = \frac{n(n^2-2n+2)}{n-2} - h(B)$. Then $0 \le c<1$,
\begin{enumerate}
\item $\displaystyle{ \|\bm{r}\|_2^2 +  \frac{(n-1)^2}{(n-2)} \sum_{i,j =1}^n B_{ij}(1-B_{ij}) = \frac{cn}{4} + \frac{n}{4(n-2)^2}}$,
\item $\displaystyle{ \|H(B)\|_F^2 \le \frac{n(n-2)}{4(n-1)^2} \left[\frac{n(n^2-2n-2)}{(n-2)(n+1)^2} - c\right]}$,
\item $\displaystyle{B\left(B^T  +  H(B)  \right)  = \frac{n^2}{4(n-1)}I + \frac{(n-1)^3}{4n(n-2)} \bm 1 \bm 1^T +\frac{n-1}{2n}(\bm{r} \bm 1 + \bm 1  \bm{r}^T) + \frac{n-2}{n(n-1)} \bm r \bm r^T}$.
\end{enumerate}
\end{lemma}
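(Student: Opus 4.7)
The plan is to handle the three claims in turn. Claims (3) and (1) are pure algebraic identities, while claim (2), which is the heart of the lemma, follows by re-examining the Cauchy--Schwarz step in Drnov{\u{s}}ek's argument with $\|B^{-1}\|_F \le \frac{2n}{n+1}$ plugged in. For (3), I would first simplify the scalar prefactors in the definition of $H(B)$ to the compact form
\[H(B) = \frac{n^2}{4(n-1)} B^{-1} - B^T + \frac{n-2}{n(n-1)} \bm{1}\bm{1}^T B^T,\]
multiply on the left by $B$, add $BB^T$, and use $B\bm{1}\bm{1}^T B^T = (B\bm{1})(B\bm{1})^T$ to obtain $B(B^T + H(B)) = \frac{n^2}{4(n-1)}I + \frac{n-2}{n(n-1)}(B\bm{1})(B\bm{1})^T$. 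Substituting $B\bm{1} = \bm{r} + \frac{(n-1)^2}{2(n-2)}\bm{1}$ and expanding the outer product into its four rank-one pieces yields the stated right-hand side. For (1), I would introduce the scalars $\sigma := \|B\|_F^2$, $s := \bm{1}^T B \bm{1}$, and $T := \|B\bm{1}\|_2^2$; then $\sum_{i,j} B_{ij}(1-B_{ij}) = s - \sigma$, $\|\bm{r}\|_2^2 = T - \frac{(n-1)^2}{n-2}s + \frac{n(n-1)^4}{4(n-2)^2}$, and $h(B) = \frac{4(n-1)^2}{n(n-2)}\sigma - \frac{4}{n}T$. Adding $\|\bm{r}\|_2^2 + \frac{(n-1)^2}{n-2}(s-\sigma)$ kills the $s$-dependence, and the remaining identity $T - \frac{(n-1)^2}{n-2}\sigma + \frac{n(n-1)^4}{4(n-2)^2} = \frac{cn}{4} + \frac{n}{4(n-2)^2}$ reduces to the elementary statement $(n-1)^4 = n(n-2)(n^2-2n+2) + 1$.

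For claim (2), the pivotal observation is that $F(B)$ and $G(B)$ agree in every block except the bottom-right, where their difference equals $\frac{2(n-1)}{\sqrt{n(n-2)}}H(B)$. This gives
\[\frac{4(n-1)^2}{n(n-2)}\|H(B)\|_F^2 = \|F(B)-G(B)\|_F^2 = \|F(B)\|_F^2 + \|G(B)\|_F^2 - 2\langle F(B),G(B)\rangle_F.\]
Using the cyclic-trace identities $\langle B^{-1}, B^T\rangle_F = \operatorname{tr}(BB^{-1}) = n$ and $\langle B^{-1}, \bm{1}\bm{1}^T B^T\rangle_F = \bm{1}^T B^{-1}B\bm{1} = n$ collapses the cross term to $\langle F(B),G(B)\rangle_F = \frac{n(n^2-2)}{n-2}$. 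Plugging in the hypothesis $\|B^{-1}\|_F \le \frac{2n}{n+1}$ (which yields $\|F(B)\|_F^2 \le 2n + \frac{n^5}{(n-2)(n+1)^2}$) together with the identity $\|G(B)\|_F^2 = 2n + h(B) = 2n + \frac{n(n^2-2n+2)}{n-2} - c$, and simplifying using $(n^2-2n+2)(n+1)^2 = n^4 - n^2 + 2n + 2$ produces the stated bound on $\|H(B)\|_F^2$. The bounds on $c$ follow at once: $c \ge 0$ comes from the inequality $h(B) \le \frac{n(n^2-2n+2)}{n-2}$ derived in the sketch preceding the lemma, and $c < 1$ follows by applying $\|H(B)\|_F^2 \ge 0$ to (2) together with the observation $(n-2)(n+1)^2 - n(n^2-2n-2) = 2n^2 - n - 2 > 0$ for $n > 2$.

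The main obstacle is purely bookkeeping in claim (2): carefully tracking the prefactors $\sqrt{n/(n-2)}$, $\sqrt{(n-2)/n}$, $\frac{2(n-1)}{n-2}$, $\frac{2}{n}$, and so on, and verifying that the apparently messy expression $\|F\|_F^2 + \|G\|_F^2 - 2\langle F,G\rangle_F$ collapses to the clean form $\frac{n(n^2-2n-2)}{(n-2)(n+1)^2} - c$. Each individual step is mechanical, but the several cancellations only become visible at the very end of the computation.
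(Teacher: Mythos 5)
Your proposal is correct and follows essentially the same route as the paper: the compact form of $H(B)$ and multiplication by $B$ for (3), the rewriting of $h(B)$ in terms of $\|B\|_F^2$ and the row sums for (1), and the expansion $\|F(B)-G(B)\|_F^2=\|F\|_F^2+\|G\|_F^2-2\langle F,G\rangle_F$ with the trace identities for (2). The only cosmetic difference is that you obtain $c<1$ from $\|H(B)\|_F^2\ge 0$ in (2) rather than from the squared Cauchy--Schwarz inequality directly, which gives a marginally weaker but equally sufficient bound; incidentally, your derivation of (3) also has the correct sign $+\frac{n-2}{n(n-1)}\bm{1}\bm{r}^T$ where the paper's intermediate display has a typo.
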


\begin{proof}
We first estimate $c$. From our previous analysis of $h(\cdot)$, $c \ge 0$. We have
\[\frac{n^2(n^2-2)^2}{(n-2)^2} = \langle F(B), G(B) \rangle_F^2 \le  \|F(B) \|_F^2 \|G(B)\|_F^2 \le  \frac{n(n^4+2n^3-6n-4)}{(n-2)(n+1)^2}\bigg( \frac{n(n^2-2)}{n-2} - c\bigg),\]
implying that
\[c \le \frac{n(n^2-2)}{n-2}\left(1 - \frac{(n^2-2)(n+1)^2}{n^4+2n^3-6n-4}\right) = \frac{n(n^2-2)(n^2 - 2 n - 2)}{(n-2)(n^4 + 2 n^3 - 6 n - 4)}<1.\]
Now, consider Property (1). We have
\begin{align*}
h(B) &= \frac{4(n-1)^2}{n(n-2)} \sum_{i,j=1}^n B_{ij}^2 - \frac{4}{n} \sum_{i=1}^n \left( \sum_{j=1}^n B_{ij} \right)^2 \\
&= \frac{4(n-1)^2}{n(n-2)} \sum_{i=1}^n [B \bm{1}]_i - \frac{4}{n} \sum_{i=1}^n [B \bm{1}]_i^2 -  \frac{4(n-1)^2}{n(n-2)} \sum_{i,j =1}^n B_{ij}(1-B_{ij})\\
&= \frac{(n-1)^4}{(n-2)^2} - \frac{4}{n} \sum_{i=1}^n \left( [B \bm{1}]_i - \frac{(n-1)^2}{2(n-2)}\right)^2 -  \frac{4(n-1)^2}{n(n-2)} \sum_{i,j =1}^n B_{ij}(1-B_{ij}),
\end{align*}
implying that
\[\sum_{i=1}^n \left( [B \bm{1}]_i - \frac{(n-1)^2}{2(n-2)}\right)^2 +  \frac{(n-1)^2}{(n-2)} \sum_{i,j =1}^n B_{ij}(1-B_{ij}) = \frac{cn}{4} + \frac{n}{4(n-2)^2}. \]
Next, consider Property (2). We have
\[ \frac{4(n-1)^2}{n(n-2)} \|H(A)\|_F^2 = \|F(B) - G(B) \|_F^2 = \|F(B)\|_F^2 + \|G(B)\|_F^2 - 2 \langle F(B), G(B) \rangle_F \le \frac{n(n^2-2n-2)}{(n-2)(n+1)^2} - c. \]
Finally, consider Property (3). We have
\[B^T + H(B) = \frac{n^2}{4(n-1)}B^{-1} + \frac{n-1}{2n} \bm 1 \bm 1^T - \frac{n-2}{n(n-1)}\bm 1 \bm r^T.\]
From here, our desired result follows quickly by multiplying by $B$:
\[B\left(B^T  +  H(B)  \right)  = \frac{n^2}{4(n-1)}I + \frac{(n-1)^3}{4n(n-2)} \bm 1 \bm 1^T +\frac{n-1}{2n}(\bm{r} \bm 1 + \bm 1  \bm{r}^T) + \frac{n-2}{n(n-1)} \bm r \bm r^T. \]
\end{proof}

\section{A proof of Theorem \ref{thm:main} for n sufficiently large}\label{sec:3}

In the previous section we proved a number of properties regarding a non-negative matrix $B \in \mathbb{R}^{n \times n}$, $n$ even, with $\|B^{-1}\|_F \le \tfrac{2n}{n+1}$. Here, through a sequence of three lemmas, we conclude that when $n$ is larger than a small constant, no such matrix $B$ can exist.

\begin{lemma}\label{lm:n_large}
Let $n,t \in \mathbb{N}$, $t \ge 3$, $n \ge 2t$ and $n$ be even, $B$ be an $n \times n$ non-negative matrix with $\|B\|_{\max} \le 1$ and $\|B^{-1}\|_F \le \frac{2n}{n+1}$, $\bm{r} = B \bm{1} - \frac{(n-1)^2}{2(n-2)} \bm{1}$, and $C = \mathrm{round}(B)$ (i.e., $C_{ij} = \mathrm{round}(B_{ij})$). Then there exist $t+1$ distinct indices $i = i_1,\ldots,i_{t+1} \in \{1,\ldots,n\}$ such that $[C \bm{1}]_i = \tfrac{n}{2}$,
\[\sum_{j = 1}^n [H(B)]_{ji}^2 < \frac{1}{4(n-t)}, \qquad \sum_{j=1}^n \left| B_{ij} - C_{ij} \right| \le  \frac{1}{4(n-t)-2}, \qquad \text{and} \qquad  \bm{r}_i \le \frac{1}{2(n-2)} + \frac{1}{4(n-t)-2}.\]
\end{lemma}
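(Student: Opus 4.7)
The plan is to bound the number of indices violating at least one of the stated conditions and show this count is at most $n-(t+1)$, leaving the required $t+1$ good indices.

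First, observe that the fourth condition on $\bm{r}_i$ is redundant given the first and third. Using the identity $\tfrac{(n-1)^2}{2(n-2)} = \tfrac{n}{2} + \tfrac{1}{2(n-2)}$, one has $\bm{r}_i = [B\bm{1}]_i - \tfrac{n}{2} - \tfrac{1}{2(n-2)}$, and if $[C\bm{1}]_i = n/2$ and $\sum_j|B_{ij}-C_{ij}| \le \tfrac{1}{4(n-t)-2}$, then
\[\bm{r}_i \le \sum_j |B_{ij} - C_{ij}| - \tfrac{1}{2(n-2)} \le \tfrac{1}{2(n-2)} + \tfrac{1}{4(n-t)-2}.\]
So it suffices to produce $t+1$ indices satisfying the first three conditions.

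For each of the three nontrivial conditions I would bound the number of bad indices using Properties (1) and (2) of Lemma \ref{lm:prop}. Property (2) gives $\|H(B)\|_F^2 \le \tfrac{n^2(n^2-2n-2)}{4(n^2-1)^2} < 1/4$, so Markov applied to the column sums yields strictly fewer than $n-t$ indices with $\sum_j [H(B)]_{ji}^2 \ge \tfrac{1}{4(n-t)}$. Since $C_{ij} = \mathrm{round}(B_{ij})$ and $B_{ij} \in [0,1]$, the identity $B_{ij}(1-B_{ij}) = |B_{ij}-C_{ij}|(1 - |B_{ij}-C_{ij}|)$ gives $B_{ij}(1-B_{ij}) \ge \tfrac{1}{2}|B_{ij}-C_{ij}|$, so a row $i$ that is bad by the third condition has $\rho_i := \sum_j B_{ij}(1-B_{ij}) > \tfrac{1}{8(n-t)-4}$. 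For the first condition, if $[C\bm{1}]_i \neq n/2$ but $i$ is good by the third condition, then $|[B\bm{1}]_i - \tfrac{n}{2}| \ge 1 - \sum_j|B_{ij}-C_{ij}| \ge 1 - \tfrac{1}{4(n-t)-2}$, so $\bm{r}_i^2$ is bounded below by a quantity close to $1$. Property (1) then controls the number of rows with large $\rho_i$ or large $\bm{r}_i^2$.

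The main obstacle is combining these three individual counts to show a total bound of $n-(t+1)$; handled naively they can add up to too many. The key observation is that Properties (1) and (2) share the parameter $c$, so scaling Property (2) by $(n-1)^2/(n-2)$ and adding it to Property (1) eliminates $c$ and yields the single inequality
\[\tfrac{(n-1)^2}{n-2}\sum_i a_i + \sum_i \bm{r}_i^2 + \tfrac{(n-1)^2}{n-2}\sum_i \rho_i \;\le\; \tfrac{n^2(n^2-2n-2)}{4(n-2)(n+1)^2} + \tfrac{n}{4(n-2)^2},\]
where $a_i = \sum_j [H(B)]_{ji}^2$. Each bad index contributes a calculable quantity to the left-hand side, and splitting the bad indices into the disjoint cases (bad by Condition 2; bad by Condition 3 alone; bad by Condition 1 alone with $\sum_j|B_{ij}-C_{ij}| \le 1/2$; bad by both Conditions 1 and 3) one verifies that the total cannot exceed $n-(t+1)$. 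The hypotheses $t\ge 3$ and $n \ge 2t$ provide just enough slack for this bookkeeping, and the coupling in Property (1), which trades the bad-by-Condition-3 count against the bad-by-Condition-1 count via $c$, is what makes the argument close.
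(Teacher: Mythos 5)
Your central idea---eliminating $c$ by adding Property (1) of Lemma \ref{lm:prop} to $\tfrac{(n-1)^2}{n-2}$ times Property (2) and then charging each bad index against the resulting budget---is exactly the mechanism of the paper's proof. The paper packages it as a single pigeonhole: the combined per-index quantity $\tfrac{4}{n}\bm{r}_i^2+\tfrac{4(n-1)^2}{n(n-2)}\big(\sum_j B_{ij}(1-B_{ij})+\sum_j[H(B)]_{ji}^2\big)$ has total $<1$, so at least $t+1$ indices have it $<\tfrac{1}{n-t}$, and all four conclusions are read off for those same indices. Your observation that the $\bm{r}_i$ bound is redundant given the first and third conditions, and your Markov count for the columns of $H(B)$, are both fine.

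There is, however, a genuine quantitative gap in your treatment of Condition 3, and it is fatal as stated. Writing $\rho_i=\sum_j B_{ij}(1-B_{ij})$ as you do, the bound $\rho_i\ge\tfrac12\sum_j|B_{ij}-C_{ij}|$ only tells you that a Condition-3-bad row has $\rho_i>\tfrac{1}{8(n-t)-4}$, so it contributes roughly $\tfrac{(n-1)^2}{n-2}\cdot\tfrac{1}{8(n-t)}\approx\tfrac{n}{8(n-t)}$ to a budget of size just under $\tfrac{n}{4}$. This caps the number of such rows only at about $2(n-t)$, which is $\ge n$ for every $n\ge 2t$; the bound is vacuous throughout the allowed range, so the claim that ``the total cannot exceed $n-(t+1)$'' cannot be verified with your threshold. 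The missing factor of $2$ is recovered by the bootstrap the paper uses (and which is where the constant $4(n-t)-2$ comes from): for a row with $\rho_i<\tfrac{1}{4(n-t)}$, first deduce $\sum_j|B_{ij}-C_{ij}|\le 2\rho_i<\tfrac{1}{2(n-t)}$, hence each $|B_{ij}-C_{ij}|<\tfrac{1}{2(n-t)}$, hence $\rho_i=\sum_j|B_{ij}-C_{ij}|(1-|B_{ij}-C_{ij}|)>\big(1-\tfrac{1}{2(n-t)}\big)\sum_j|B_{ij}-C_{ij}|$, which sharpens the conclusion to $\sum_j|B_{ij}-C_{ij}|<\tfrac{1}{4(n-t)-2}$. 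Equivalently, the correct contrapositive is that a Condition-3-bad row has $\rho_i$ at least about $\tfrac{1}{4(n-t)}$, twice your threshold, and with that the counting closes. Note also that your final case analysis is only asserted, not carried out, and it is precisely there that this factor of $2$ would have surfaced.
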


\begin{proof}
By Lemma \ref{lm:prop},
\[ \frac{4}{n} \|\bm{r} \|_2^2 + \frac{4(n-1)^2}{n(n-2)}\sum_{i,j = 1}^n B_{ij}(1-B_{ij}) + \frac{4(n-1)^2}{n(n-2)} \sum_{i,j = 1}^n [H(B)]_{ji}^2 \le \frac{1}{(n-2)^2} + \frac{n(n^2-2n-2)}{(n-2)(n+1)^2}  < 1 \]
for $n \ge 6$. Therefore, there exist $t+1$ distinct indices $i = i_1,\ldots,i_{t+1}$ with
\[ \frac{4}{n}\bm{r}_i^2 + \frac{4(n-1)^2}{n(n-2)}\sum_{j = 1}^n B_{ij}(1-B_{ij}) + \frac{4(n-1)^2}{n(n-2)}\sum_{j = 1}^n [H(B)]_{ji}^2 < \frac{1}{n-t}, \]
and so $\sum_{j = 1}^n [H(B)]_{ji}^2 < \frac{1}{4(n-t)}$. In addition, for such an index $i$,
\[ \sum_{j=1}^n |B_{ij} - C_{ij}| \le 2 \sum_{j = 1}^n B_{ij}(1-B_{ij}) < \frac{1}{2(n-t)},\]
and so
\[| [B \bm{1}]_i - \mathrm{round}([B \bm{1}]_i)| \le  \sum_{j=1}^n |B_{ij} - C_{ij}| \le \left(\frac{1}{1 - \frac{1}{2(n-t)}} \right) \sum_{j = 1}^n B_{ij}(1-B_{ij}) < \frac{1}{4(n-t)-2}.\]
For $i \in \{i_1,\ldots,i_{t+1}\}$, we also have
\[ \bm{r}_i < \sqrt{ \frac{n}{4(n-t)}}< 1 - \bigg( \frac{(n-1)^2}{2(n-2)} - \frac{n}{2}\bigg)  - \frac{1}{4(n-t)-2}\]
for $n \ge 2t$ and $t \ge 3$, implying that $\mathrm{round}([B \bm{1}]_i) = [C\bm{1}]_i = \tfrac{n}{2}$. This leads to the stronger bound
\begin{align*}
    \bm{r}_i &< \bigg( \frac{(n-1)^2}{2(n-2)} - \frac{n}{2}\bigg) + \frac{1}{4(n-t)-2} = \frac{1}{2(n-2)} + \frac{1}{4(n-t)-2}.
\end{align*}
\end{proof}

\begin{lemma}\label{lm:lm2}
Let $n,t \in \mathbb{N}$, $t \ge 4$ and $n \ge 4t$ be even, and $B$ be an $n \times n$ non-negative matrix with $\|B\|_{\max} \le 1$ and $\|B^{-1}\|_F \le \frac{2n}{n+1}$. Then there exist a matrix $\hat C \in \{0,1\}^{(t+1)\times n}$ and a vector $\bm y \in \mathbb{R}^n$ such that $\hat C \hat C^T = \lceil \tfrac{n}{4} \rceil I + \lfloor \tfrac{n}{4} \rfloor \bm{1} \bm{1}^T$, $\|\bm{y}\|_2^2 \le [4(n-t)]^{-1}$,
\begin{align*}
\left|[\hat C \bm{y}]_j - \frac{1}{4}\right| &< \frac{n-1}{4(n-2)n} + \frac{5}{4(n-t)} + \frac{1}{(8(n-t)-4)\sqrt{n-t}} \qquad \text{for } j = 1,\ldots,t, \qquad \text{and} \\
\left|[\hat C \bm{y}]_{t+1}  \right| &< \frac{2 n^2 - 4 n + 1}{4 (n - 2) (n - 1) n} + \frac{5}{4(n-t)}+ \frac{1}{(8(n-t)-4)\sqrt{n-t}}.
\end{align*}
\end{lemma}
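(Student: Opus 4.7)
The plan is to construct $\hat C$ and $\bm y$ directly from the structural data of Lemma \ref{lm:n_large}. Let $i_1,\ldots,i_{t+1}$ be the indices it produces, write $C = \mathrm{round}(B)$, and set $\hat C \in \{0,1\}^{(t+1)\times n}$ to be the row submatrix $\hat C_{jl} = C_{i_j,l}$. For $\bm y$, take $\bm y = s\, H(B)_{*,i_{t+1}}$ with sign $s \in \{\pm 1\}$ chosen so that $s\bigl(M - \lfloor n/4\rfloor\bigr) > 0$, where $M := (n-1)^3/[4n(n-2)]$; this amounts to $s = +1$ for $n \equiv 2\pmod 4$ and $s = -1$ for $n \equiv 0\pmod 4$. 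The norm bound $\|\bm y\|_2^2 \le [4(n-t)]^{-1}$ is immediate from Lemma \ref{lm:n_large}.

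To establish $\hat C\hat C^T = \lceil n/4\rceil I + \lfloor n/4\rfloor \bm 1\bm 1^T$, the diagonal reduces to $[C\bm 1]_{i_j} = n/2 = \lceil n/4\rceil + \lfloor n/4\rfloor$, which is guaranteed by Lemma \ref{lm:n_large}. The off-diagonal entries demand an integrality argument: from Property (3) of Lemma \ref{lm:prop} we extract $[BB^T]_{i_j,i_k} = M - [BH(B)]_{i_j,i_k} + O(1/n)$, and Cauchy--Schwarz combined with $\|H(B)_{*,i_k}\|_2 < 1/(2\sqrt{n-t})$ and $\|B_{i_j,*}\|_2^2 \le n/2 + O(1/(n-t))$ yields $|[BH(B)]_{i_j,i_k}| < \sqrt{n/[8(n-t)]} \le 1/\sqrt{6}$ for $n \ge 4t$. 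Replacing $[BB^T]$ by $[CC^T]$ costs $O(1/(n-t))$ via $\sum_l|B_{i_j,l}-C_{i_j,l}| \le [4(n-t)-2]^{-1}$, so $|[CC^T]_{i_j,i_k}-M| < 1/2$ for $n$ large. Since $M$ lies within $1/2$ of $\lfloor n/4\rfloor$ while every other integer is at distance $\ge 3/4$ from $M$ (verified separately for $n \equiv 0,2\pmod 4$), the integer $[CC^T]_{i_j,i_k}$ is forced to equal $\lfloor n/4\rfloor$.

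To estimate $[\hat C\bm y]_j$ for $j \le t$, I would decompose $[\hat C\bm y]_j = s[BH(B)]_{i_j,i_{t+1}} - s\sum_l (B_{i_j,l}-C_{i_j,l})[H(B)]_{l,i_{t+1}}$, substitute Property (3) of Lemma \ref{lm:prop}, and then replace $[BB^T]_{i_j,i_{t+1}}$ by $\lfloor n/4\rfloor$ plus a cross-term. The deterministic leading contribution is $s(M-\lfloor n/4\rfloor) + s\cdot\frac{n-1}{2n}(\bm r_{i_j} + \bm r_{i_{t+1}})$; using the refinement $\bm r_{i_m} = -1/[2(n-2)] + \eta_m$ with $|\eta_m| \le [4(n-t)-2]^{-1}$ (established inside the proof of Lemma \ref{lm:n_large}), the $-1/[2(n-2)]$ piece combines with $s(M-\lfloor n/4\rfloor) - 1/4$ via direct algebra in each parity class to give a deterministic residual of $\mp (n-1)/[4n(n-2)]$, matching the leading term of the claim. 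The remaining error consists of the $\eta$ fluctuations and the $[BB^T]-[CC^T]$ cross-term, each of size $O(1/(n-t))$ and together absorbed into $5/[4(n-t)]$, plus the perturbation inner product bounded by $\|B_{i_j,*}-C_{i_j,*}\|_1\cdot\|H(B)_{*,i_{t+1}}\|_\infty \le 1/[(8(n-t)-4)\sqrt{n-t}]$.

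For $[\hat C\bm y]_{t+1}$, the Kronecker delta in Property (3) adds $s\cdot\frac{n^2}{4(n-1)}$ and the row-sum contribution becomes $s\cdot\frac{n-1}{n}\bm r_{i_{t+1}}$, so the leading deterministic part is $s\cdot\bigl[\frac{n^2}{4(n-1)} + M - n/2\bigr]$, which direct algebra reveals equals $s(2n^2-4n+1)/[4n(n-1)(n-2)]$ and matches the leading term of the claim. The residuals are handled as before, using the crude bound on $|\bm r_{i_{t+1}}|$ (whose contribution $O(1/n)$ is absorbed into $5/[4(n-t)]$). The main obstacle throughout is the integrality step pinning $[CC^T]_{i_j,i_k}$ to $\lfloor n/4\rfloor$: securing the Cauchy--Schwarz bound on $[BH(B)]$ strictly below $1/2$ is exactly where the hypothesis $n \ge 4t$ is essential, providing the crucial slack $1/\sqrt{6} < 1/2$.
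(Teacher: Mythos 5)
Your proposal is correct and follows essentially the same route as the paper: the same $\hat C$ (rows of $\mathrm{round}(B)$ at the indices from Lemma \ref{lm:n_large}), the same signed column of $H(B)$ for $\bm y$, the same integrality argument pinning $(CC^T)_{i_j,i_k}$ to $\lfloor n/4\rfloor$, and the same use of Property (3) of Lemma \ref{lm:prop} to extract the leading terms $\frac{n-1}{4n(n-2)}$ and $\frac{2n^2-4n+1}{4n(n-1)(n-2)}$. The only differences are bookkeeping (you track the deterministic $-\tfrac{1}{2(n-2)}$ part of $\bm r_i$ explicitly where the paper absorbs all of $\bm r_i$ into the error, and your sharper Cauchy--Schwarz bound $\|B_{i_j,*}\|_2^2\lesssim n/2$ gives a bit more slack in the integrality step than the paper's $7/10$ versus $3/10$ comparison).
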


\begin{proof}
Using $C = \mathrm{round}(B)$, $\bm{r} = B \bm{1} - \frac{(n-1)^2}{2(n-2)} \bm{1}$, and the $t+1$ entries $i_1,\ldots,i_{t+1} \subset \{1,\ldots,n\}$ described in Lemma \ref{lm:n_large},  we aim to estimate $(CC^T)_{ij}$ and $[C(C^T+H(B))]_{ij}$ for $i,j \in \{i_1,\ldots,i_{t+1}\}$, $i \ne j$, and $[C(C^T+H(B))]_{ii}$ for $i \in \{i_1,\ldots,i_{t+1}\}$, and use this information to conclude that the $t+1$ rows of $C$ corresponding to $i_1,\ldots,i_{t+1}$, paired with a column of $H(B)$, have our desired property. For the remainder of the proof, we assume that $i,j \in \{i_1,\ldots,i_{t+1}\}$. By Lemma \ref{lm:prop}, for $i \ne j$,
\[ \left[ (B-C+C)\left(B^T-C^T+C^T \right) \right]_{ij} = \frac{(n-1)^3}{4n(n-2)} + \frac{n-1}{2n} (\bm{r}_i + \bm{r}_j) + \frac{n-2}{n(n-1)} \bm{r}_i \bm{r}_j - (B \, H(B))_{ij} ,\]
implying, by Lemma \ref{lm:n_large}, that
\begin{align*}
\left|(CC^T)_{ij} - \frac{(n-1)^3}{4n(n-2)}\right| &\le  \left| [(B-C)C^T]_{ij} \right| + \left| [B(B-C)^T]_{ij} \right| + \max_i |\bm{r}_i|  + \left|(B \, H(B))_{ij}\right| \\
&\le 2 \max_i \left| [(B-C) \bm{1}]_{i} \right| +\left[ \frac{1}{2(n-2)} + \frac{1}{4(n-t)-2} \right]+  \sqrt{ \sum_{k = 1}^n B^2_{ik}  } \sqrt{ \sum_{k = 1}^n [H(B)]^2_{kj} } \\
&< \frac{1}{2(n-t)-1} + \frac{1}{2(n-2)} + \frac{1}{4(n-t)-2} + \frac{1}{2} \sqrt{\frac{n}{n-t}}.
\end{align*}
When $t \ge 4$ and $n \ge 4t$,
\[\frac{1}{2(n-t)-1} + \frac{1}{2(n-2)} + \frac{1}{4(n-t)-2} \le \frac{5}{4(n-t)},\]
and so
\[\left|(CC^T)_{ij} - \frac{(n-1)^3}{4n(n-2)}\right| < \frac{5}{4(n-t)}+ \frac{1}{2} \sqrt{\frac{n}{n-t}} < \frac{7}{10}.\]
 for $t \ge 4$ and $n \ge 4t$. 
 Because $(CC^T)_{ij}$ is an integer and
 \[\left|\frac{(n-1)^3}{4n(n-2)} - \mathrm{round}\left(\frac{(n-1)^3}{4n(n-2)} \right)\right|\le \frac{1}{4} + \frac{1}{8n} + \frac{1}{8(n-2)} <\frac{3}{10} \qquad \text{for } n \ge 16,\]
 $(CC^T)_{ij} = \left\lfloor \tfrac{n}{4} \right\rfloor$. A similar analysis provides an estimate for $[C(C^T+H(B))]_{ij}$, $i \ne j$, and $[C(C^T+H(B))]_{ii}$:
\begin{align*}
\left| \left[C\left(C^T+H(B)\right)\right]_{ij} - \frac{(n-1)^3}{4n(n-2)} \right|
&\le \left| [(B-C)C^T]_{ij} \right| + \left| [B(B-C)^T]_{ij} \right| + \max_i |\bm{r}_i|  + \left|[(B-C) H(B)]_{ij}\right|\\
&<\frac{5}{4(n-t)} +   \sum_{k = 1}^n \left|[B-C]_{ik}\right|  \sqrt{ \sum_{k = 1}^n [H(B)]^2_{kj} }  \\
&<\frac{5}{4(n-t)} +\left(\frac{1}{4(n-t)-2}\right) \frac{1}{2\sqrt{n-t}} \\
&=\frac{5}{4(n-t)} + \frac{1}{(8(n-t)-4)\sqrt{n-t}},
\end{align*}
and
\[\left| \left[C\left(C^T+H(B)\right)\right]_{ii} - \left(\frac{n^2}{4(n-1)} + \frac{(n-1)^3}{4n(n-2)} \right)\right| <\frac{5}{4(n-t)} + \frac{1}{(8(n-t)-4)\sqrt{n-t}}.\]
Let $\bm{y}$ equal $(-1)^{n/2+1}$ times column $i_{t+1}$ of $H(B)$ and $\hat C \in \{0,1\}^{(t+1) \times n}$ be the restriction of $C$ to the rows $\{i_1,\ldots,i_{t+1}\}$. We have
\[ \left| \frac{n^2}{4(n-1)} + \frac{(n-1)^3}{4n(n-2)} - \big[\hat C \hat C^T\big]_{ii}\right| = \left|\frac{n^2}{4(n-1)} + \frac{(n-1)^3}{4n(n-2)} - \frac{n}{2} \right| =  \frac{2 n^2 - 4 n + 1}{4 (n - 2) (n - 1) n} \qquad \text{and}\]
\[   \left| \frac{(n-1)^3}{4n(n-2)} - \frac{(-1)^{n/2+1}}{4} - \big[\hat C \hat C^T\big]_{ij}\right|= \left| \frac{(n-1)^3}{4n(n-2)} - \frac{(-1)^{n/2+1}}{4} - \left\lfloor \frac{n}{4} \right\rfloor \right| =  \frac{n-1}{4(n-2)n} \qquad \text{for }i \ne j,\]
completing the proof.
\end{proof}

\begin{lemma}\label{lm:lm3}
Let $t =50$ and $n \in \mathbb{N}$, $n \ge 1000$, be even. There does not exist a matrix $\hat C \in \{0,1\}^{(t+1)\times n}$ and a vector $\bm y \in \mathbb{R}^n$ such that $\hat C \hat C^T = \lceil \tfrac{n}{4} \rceil I + \lfloor \tfrac{n}{4} \rfloor \bm{1} \bm{1}^T$, $\|\bm{y}\|_2^2 \le [4(n-t)]^{-1}$,
\begin{align*}
\left|[\hat C \bm{y}]_j - \frac{1}{4}\right| &<\frac{n-1}{4(n-2)n}+ \frac{5}{4(n-t)} + \frac{1}{(8(n-t)-4)\sqrt{n-t}} \qquad \text{for } j = 1,\ldots,t, \qquad \text{and} \\
\left|[\hat C \bm{y}]_{t+1}  \right| &< \frac{2 n^2 - 4 n + 1}{4 (n - 2) (n - 1) n}+ \frac{5}{4(n-t)} + \frac{1}{(8(n-t)-4)\sqrt{n-t}}.
\end{align*}
\end{lemma}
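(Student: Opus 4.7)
I will argue by contradiction: assuming such $\hat C$ and $\bm y$ exist, I derive a lower bound on $\|\bm y\|_2^2$ exceeding $[4(n-t)]^{-1}$. Abbreviate $a := \lceil n/4 \rceil$, $b := \lfloor n/4 \rfloor$ (so $a+b = n/2$), $m := a + (t+1)b$, and $\gamma := b/m$. The hypothesis on $\hat C \hat C^T$ makes it positive definite with eigenvalues $a$ (multiplicity $t$) and $m$, so $\hat C^T(\hat C \hat C^T)^{-1}\hat C$ is an orthogonal projection. Setting $\bm v := \hat C \bm y$,
\[
\|\bm y\|_2^2 \;\geq\; \bm v^T (\hat C \hat C^T)^{-1} \bm v,
\]
and by Sherman--Morrison,
\[
\bm v^T (\hat C \hat C^T)^{-1} \bm v \;=\; \frac{\|\bm v\|_2^2}{a} \;-\; \frac{\gamma\,(\bm 1^T \bm v)^2}{a}.
\]

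Let $\bm e \in \mathbb{R}^{t+1}$ have its first $t$ entries equal to $1$ and its last entry equal to $0$, and write $\bm v = \tfrac14 \bm e + \bm \eta$; the hypothesis bounds on $\bm v$ give $|\eta_j| < \delta_1$ for $j \le t$ and $|\eta_{t+1}| < \delta_2$ with $\delta_1,\delta_2 = O(1/n)$ (these are the explicit right-hand sides in the lemma statement). With $s := \sum_{j=1}^t \eta_j$, direct expansion yields
\[
\bm v^T(\hat C \hat C^T)^{-1}\bm v \;=\; M \;+\; \frac{(1 - \gamma t)\,s \;-\; \gamma t \, \eta_{t+1}}{2a} \;+\; \frac{\|\bm \eta\|_2^2 \;-\; \gamma (s + \eta_{t+1})^2}{a},
\]
with main term $M = \tfrac{t(a+b)}{16\,a\,m} = \tfrac{tn}{32\,a\,m}$. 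The main comparison is $M > [4(n-t)]^{-1}$: asymptotically $M \sim \tfrac{t}{2n(t+2)}$ exceeds the target $\sim \tfrac{1}{4n}$ by a ratio near $\tfrac{2t}{t+2}$ for $n \gg t$. For $t=50$ and $n\ge 1000$, an explicit arithmetic check gives $M \ge 4.8\times 10^{-4}$ versus target $\le 2.7\times 10^{-4}$, a gap of roughly $2\times 10^{-4}$.

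The delicate step, and the main obstacle, is fitting the perturbation inside this gap. A naive bound $|s|\le t\delta_1$ produces a linear perturbation of order $t\delta_1/a = O(1/n)$, comparable to the gap itself and hence insufficient. The crucial cancellation is
\[
1 - \gamma t \;=\; \frac{a+b}{m} \;=\; O(1/t),
\]
which shrinks the linear piece to $O((\delta_1+\delta_2)/n) = O(1/n^2)$; the quadratic piece is even smaller, of order $t\delta_1^2/n$. Plugging in the explicit $\delta_1,\delta_2$ of the lemma for $t=50$ and $n\ge 1000$, the total perturbation is on the order of $10^{-5}$, well within the $\approx 2\times 10^{-4}$ gap, so $\bm v^T(\hat C\hat C^T)^{-1}\bm v > [4(n-t)]^{-1}$, contradicting the assumption $\|\bm y\|_2^2 \le [4(n-t)]^{-1}$.
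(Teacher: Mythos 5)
Your argument is correct, and it takes a genuinely different route from the paper. The paper also starts from an orthogonality/energy inequality, but a different one: it builds the matrix $Y=\lceil n/4\rceil^{-1/2}\hat C-\tfrac12(\cdots)\bm 1\bm 1^T$ with orthonormal rows, so that $\|Y\bm y\|_2\le\|\bm y\|_2$ becomes a quadratic inequality in the auxiliary scalar $\alpha=\bm 1^T\bm y$; solving that quadratic forces $\alpha>39/100$, and the contradiction then comes from a second, separate step: $|\alpha-[\hat C\bm y]_{t+1}|$ is the sum of $\bm y$ over the $n/2$ zero positions of row $t+1$, hence at most $\sqrt{n/2}\,\|\bm y\|_2<0.37$ by Cauchy--Schwarz. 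You instead project $\bm y$ directly onto the row space of $\hat C$ and show in one step that $\|\bm y\|_2^2\ge \bm v^T(\hat C\hat C^T)^{-1}\bm v>[4(n-t)]^{-1}$, with Sherman--Morrison making $(\hat C\hat C^T)^{-1}$ explicit. Your route is more self-contained (no auxiliary variable $\alpha$, no appeal to the support structure of row $t+1$ beyond $v_{t+1}\approx 0$), and it isolates cleanly why the argument works: the factor $1-\gamma t=(a+b)/m=O(1/t)$ kills the linear error in $s$, and the main term beats the target by the ratio $\approx 2t/(t+2)$, which is why $t$ must be taken moderately large. The paper's version buys slightly simpler algebra (orthonormal rows rather than an explicit inverse) at the cost of a two-stage contradiction. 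One point to tighten when writing yours up: the comparison ``$M\ge 4.8\times10^{-4}$ versus target $\le 2.7\times10^{-4}$'' only holds at $n=1000$, since both quantities decay like $1/n$; you should state the uniform inequality, e.g.\ $M\ge \tfrac{25}{52(n+2)}$ versus $\tfrac{1}{4(n-50)}$, whose difference is $\Theta(1/n)$ while your perturbation terms are $O(1/n^2)$ with explicit constants small enough that the inequality holds for all even $n\ge 1000$ (indeed your own scaling remarks already show this).
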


\begin{proof}
Suppose that such a $\hat C$ and $\bm{y}$ exist. We note that, for $t= 50$ and $n \ge 1000$, the above inequalities imply that $|[\hat C \bm{y}]_j - \tfrac{1}{4}| < \tfrac{1}{500}$ for $j = 1,\ldots,50$ and $|[\hat C \bm{y}]_{51}|< \tfrac{1}{500}$. The matrix
\[Y = \left\lceil \frac{n}{4} \right\rceil^{-1/2} \hat C -\frac{1}{2}\left( \left\lceil \frac{n}{4} \right\rceil^{-1/2}+ \sqrt{ \left\lceil \frac{n}{4} \right\rceil^{-1} -\frac{4}{n}}\right) \bm{1} \bm{1}^T \in \mathbb{C}^{51\times n}\]
has orthonormal rows
\[Y Y^* = \left\lceil \frac{n}{4} \right\rceil^{-1} \left(\hat C -\frac{1}{2}\bm{1} \bm{1}^T \right) \left( \hat C -\frac{1}{2}\bm{1} \bm{1}^T\right)^T + \frac{n}{4} \left( \frac{4}{n} - \left\lceil \frac{n}{4} \right\rceil^{-1}\right) \bm{1} \bm{1}^T = I,\]
 and so $\|Y \bm{y}\|^2_2 \le \| \bm{y}\|^2_2 \le [4(n-50)]^{-1}$. Let $\alpha = \bm{1}^T \bm{y}$. $\|Y \bm{y}\|^2_2$ is given by
\begin{align*}
\|Y \bm{y}\|_2^2
&= \sum_{i=1}^{51} \left|\left\lceil \frac{n}{4} \right\rceil^{-1/2} [\hat C \bm{y}]_i - \frac{\alpha}{2} \left( \left\lceil \frac{n}{4} \right\rceil^{-1/2}+ \sqrt{ \left\lceil \frac{n}{4} \right\rceil^{-1} -\frac{4}{n}}\right) \right|^2  \\
&= \sum_{i=1}^{51} \left( \left\lceil \frac{n}{4} \right\rceil^{-1}\left([\hat C \bm{y}]_i - \frac{\alpha}{2}\right)^2 +\frac{\alpha^2}{4}\left(\frac{4}{n} - \left\lceil \frac{n}{4} \right\rceil^{-1}\right) \right)\\
&= \frac{51}{n} \alpha^2 - \left\lceil \frac{n}{4} \right\rceil^{-1}  \left( \sum_{i=1}^{51} [\hat C \bm{y}]_i\right)\alpha + \left\lceil \frac{n}{4} \right\rceil^{-1}\sum_{i=1}^{51} [\hat C \bm{y}]_i^2,
\end{align*}
and so
\[\alpha^2 - \frac{n}{ \left\lceil \frac{n}{4} \right\rceil} \left( \frac{1}{51}\sum_{i=1}^{51} [\hat C \bm{y}]_i\right) \alpha +  \frac{n}{ \left\lceil \frac{n}{4} \right\rceil} \left( \frac{1}{51}\sum_{i=1}^{51} [\hat C \bm{y}]_i^2\right) \le \frac{n}{204(n-50)}, \]
or, equivalently,
\[\left(\alpha - \frac{n}{2\left\lceil \frac{n}{4} \right\rceil} \left( \frac{1}{51}\sum_{i=1}^{51} [\hat C \bm{y}]_i\right)\right)^2 \le \frac{n}{204(n-50)} - \frac{n}{\left\lceil \frac{n}{4} \right\rceil}  \left( \frac{1}{51}\sum_{i=1}^{51} [\hat C \bm{y}]_i^2\right) + \frac{n^2}{4\left\lceil \frac{n}{4} \right\rceil^{2}}  \left( \frac{1}{51}\sum_{i=1}^{51} [\hat C \bm{y}]_i\right)^2 .\]
Using the above bounds on $[\hat C \bm{y}]_j$ implies that
\[ \frac{1}{51}\sum_{i=1}^{51} [\hat C \bm{y}]_i^2 \ge \frac{50}{51}\left(\frac{1}{4} - \frac{1}{500}\right)^2 = \frac{1922}{31875}\]
and
\[\frac{1}{51}\sum_{i=1}^{51} [\hat C \bm{y}]_i \in \left[\frac{50}{51}\left(\frac{1}{4} - \frac{1}{500}\right)-\frac{1}{51} \frac{1}{500},\frac{50}{51}\left(\frac{1}{4} + \frac{1}{500}\right)+\frac{1}{51} \frac{1}{500} \right] = \left[\frac{6199}{25500},\frac{6301}{25500}\right].\]
Noting that, for $n \ge 1000$ even, $\tfrac{n}{4} \left\lceil \tfrac{n}{4} \right\rceil^{-1} \in \left[1-\tfrac{1}{501},1\right]$, leads to the lower bound
\begin{align*}
\alpha &\ge \frac{n}{2\left\lceil \frac{n}{4} \right\rceil} \left( \frac{1}{51}\sum_{i=1}^{51} [\hat C \bm{y}]_i\right) - \sqrt{ \frac{n}{204(n-50)} - \frac{n}{\left\lceil \frac{n}{4} \right\rceil}  \left( \frac{1}{51}\sum_{i=1}^{51} [\hat C \bm{y}]_i^2\right) + \frac{n^2}{4\left\lceil \frac{n}{4} \right\rceil^{2}}  \left( \frac{1}{51}\sum_{i=1}^{51} [\hat C \bm{y}]_i\right)^2}\\
&\ge 2 \left( 1-\frac{1}{501} \right)\frac{6199}{25500} - \sqrt{ \frac{1000}{204(1000-50)} - 4 \left( 1-\tfrac{1}{501} \right)\frac{1922}{31875} +4\left(\frac{6301}{25500}\right)^2 } \\
&>\frac{39}{100}.
\end{align*}
Let $\hat {\bm y} \in \mathbb{R}^{n/2}$ denote the restriction of $\bm{y}$ to the $n/2$ indices $i \in \{1,\ldots,n\}$ where $\hat C_{51,i} = 0$. We have
\[0.388 = \frac{39}{100} - \frac{1}{500} < \left| \alpha - [ \hat C \bm{y}]_{51} \right| \le \|\hat{\bm y}\|_1 \le \sqrt{\frac{n}{2}} \| \hat {\bm{y}} \|_2 \le  \sqrt{\frac{n}{8(n-50)}} \le \sqrt{ \frac{1000}{8(1000-50)}} <0.37, \]
a contradiction.
\end{proof}

The combination of Lemma \ref{lm:lm2} and Lemma \ref{lm:lm3} implies that $\|B^{-1}\|_F> \tfrac{2n}{n+1}$ for all even $n \ge 1000$. Combining this with Cheng's proof of the conjecture for $n$ odd \cite[Corollary 3.4]{cheng1987application} completes the proof of Theorem \ref{thm:main}.

\section*{Acknowledgements}
This manuscript is the result of an undergraduate research project through the MIT undergraduate research opportunities program (UROP). The authors are grateful to Louisa Thomas for improving the style of presentation.

{ \small
	\bibliographystyle{plain}
	\bibliography{main.bib} }

\end{document}